\newtheorem{thm}[equation]{Theorem}
\let\c@subsubsection\c@equation
\newtheorem{prop}[equation]{Proposition}
\newtheorem{cor}[equation]{Corollary}
\theoremstyle{remark}
\newtheorem{rmk}[equation]{Remark}
\theoremstyle{definition}
\newtheorem{defi}[equation]{Definition}
\newcommand{\Hom}{\mathrm{Hom}}
\newcommand{\Gm}{\mathbb G _{m}}
\newcommand{\spec}[1]{\mathrm{Spec}(#1)}
\newcommand{\generators}{\mathcal G}
\newcommand{\hr}{\sphere _{R}}
\newcommand{\sphere}{\mathbf 1}
\DeclareMathOperator*{\hocolim}{hocolim}
\newcommand{\colim}{\mathrm{colim}}
\newcommand{\DMeff}{DM^{\mathrm{eff}}}
\newcommand{\northogonal}[1]{DM ^{\perp}(#1)}
\newcommand{\northogonalSH}[1]{\mathcal{SH} ^{\perp}(#1)}
\numberwithin{equation}{subsection}
\begin{document}


\title{On the convergence of the orthogonal spectral sequence}


\author{Cesar Galindo}
\address{Instituto de Matem\'aticas, Ciudad Universitaria, UNAM, DF 04510, M\'exico}
\email{cesar\_gal@ciencias.unam.mx}

\author{Pablo Pelaez}
\address{Instituto de Matem\'aticas, Ciudad Universitaria, UNAM, DF 04510, M\'exico}
\email{pablo.pelaez@im.unam.mx}


\subjclass[2010]{Primary 14C25, 14C35, 14F42, 19E15; Secondary 18G55, 55P42}

\keywords{Bloch-Beilinson-Murre filtration, Chow Groups, Filtration on the Chow Groups, Mixed Motives, Slice spectral sequence}


\begin{abstract}
We  show that the orthogonal spectral sequence 
introduced by the second author is strongly
convergent in Voevodsky's triangulated category of motives $DM$
over a field $k$. In the context
of the Morel-Voevodsky $\mathbb A ^1$-stable homotopy category
we provide concrete examples where the spectral sequence
is not strongly convergent, and give a criterion under which the
strong convergence still holds.  This criterion holds
for Voevodsky's slices, and as a consequence we obtain
a spectral sequence which converges strongly to the $E_1$-term
of Voevodsky's slice spectral sequence.
\end{abstract}


\maketitle

\section{Introduction}  \label{sec.introd}

\subsection{}
In order to study finite filtrations on the Chow groups of a smooth
projective variety $Y$ over a field $k$ which satisfy some of the properties
of the still conjectural Bloch-Beilinson-Murre filtration
\cite{MR923131}, \cite{MR558224}, \cite{MR1225267}, 
the second author introduced a tower of triangulated functors
$bc_{\leq \bullet}$ \cite{MR3614974}:
\begin{equation} \label{dual.st}	
	\cdots \rightarrow bc_{\leq n-1} \rightarrow bc_{\leq n} 
	\rightarrow bc_{\leq n+1} \cdots \rightarrow 
\end{equation}
in Voevodsky's triangulated category of motives $DM$.  

The filtration
on the Chow groups
with coefficients in a commutative ring $R$
is defined by evaluating the tower \eqref{dual.st} in the motive of a point $\hr$
and then mapping $M(Y)(-q)[-2q]$ 
into $bc_{\leq \bullet}(\hr)$, where $M(Y)$ is the motive
of $Y$, and $(-q)$ (resp. [-2q]) is defined in terms of the Tate twist
(resp. suspension) in $DM$, see \ref{def.DM}.  
This process gives a filtration in the Chow groups since
\cite{MR1883180}: 
\[CH^{q}(Y)_{R}\cong \Hom _{DM}(M(Y)(-q)[-2q], \sphere _{R}).\]

Given $A, B\in DM$, one may as well evaluate the tower \eqref{dual.st}
in $A$ and then map $B$ into $bc_{\leq \bullet}(A)$.  Then one
obtains a spectral sequence:
\begin{equation}  \label{eq.conv.ss}
E^{1}_{p,q}=\Hom_{DM}(B, (bc_{p/p-1}A)[q-p])
\Rightarrow \Hom_{DM}(B, A)
\end{equation}
where $bc_{p/p-1}A$ is defined in terms of a canonical distinguished
triangle in $DM$ \cite[3.2.8]{MR3614974}: 
\[bc_{\leq p-1}A\rightarrow bc_{\leq p}A\rightarrow bc_{p/p-1}A
\]

The  goal of this paper is to study the convergence properties of
the spectral sequence \eqref{eq.conv.ss}.

Our main result \eqref{main.thm} shows that 
the spectral sequence \eqref{eq.conv.ss} is strongly convergent
for $B=M(X)(r)[s]$, with $r, s\in \mathbb Z$ and $X$ an arbitrary
smooth scheme of finite type over $k$.
We show as well that the analogous result does not hold
in the
Morel-Voevodsky $\mathbb A ^1$-stable homotopy category 
$\mathcal {SH}$ by 
providing explicit counterexamples 
\eqref{prop.1notconv}-\eqref{cor.notconv}.  On the other hand,
we verify that under suitable conditions for $A\in \mathcal {SH}$ 
the spectral sequence is strongly convergent \eqref{prop.orth.conv}, and that
this conditions are satisfied for Voevodsky's slices $s_mG\in \mathcal{SH}$.
As a direct consequence we obtain
a spectral sequence converging strongly to the $E_1$-term of
Voevodsky's slice spectral sequence \eqref{cor.Voev.slice}.

In a future work, we will apply the spectral sequence \eqref{eq.conv.ss}
and its convergence properties to describe
the higher terms for the filtration \cite[6.14]{MR3614974}
mentioned above
 on the Chow groups of
smooth projective varieties with rational coefficients.

We refer the reader to \cite{kohrita2019filtrations} and \cite{bondarko2021smooth} for filtrations which
are constructed by a related process.

\section{Preliminaries}

In this section we fix the notation that will be used throughout the rest
of the paper and collect together facts  from the literature
that will be necessary to establish our results.  

\subsection{Definitions and Notation}	\label{subsec.defandnots}		

We  fix a base field $k$.
We will write $Sch_{k}$ for the category of $k$-schemes
of finite type and $Sm_{k}$ for the full
subcategory of $Sch_{k}$ consisting of smooth $k$-schemes regarded
as a site with the Nisnevich topology.	  

We will use the following notation in all the categories under consideration: $0$ will
denote the zero object, and $\cong$ will denote that a map (resp. a functor) is an
isomorphism (resp. an equivalence of categories).  

We shall use freely the language of triangulated categories.  Our main reference will 
be \cite{MR1812507}.  Given a triangulated category, we will write $[1]$ 
(resp. $[-1]$) to denote its suspension 
(resp. desuspension) functor; and for $n>0$, $[n]$ (resp. $[-n]$)
will be the composition of $[1]$
(resp. $[-1]$) iterated $n$-times.  If $n=0$,
$[0]$ will be the identity functor.  Given an inductive system
$\cdots T_n\rightarrow T_{n+1}\rightarrow \cdots$, its homotopy
colimit, $\hocolim _{n\rightarrow \infty}T_n$ will be defined as
in \cite{MR1812507}.

\subsection{Triangulated categories} \label{subsec.somefacts}

Let $\mathcal T$ be a compactly generated triangulated category in the sense of Neeman
\cite[Def. 1.7]{MR1308405} with set of compact generators  $\mathcal G$.  For 
$\mathcal G '\subseteq \mathcal G$, let $Loc(\mathcal G ')$ denote the smallest full
triangulated subcategory of $\mathcal T$ which contains $\mathcal G'$ and is closed
under arbitrary (infinite) coproducts.

\begin{defi}  \label{def.orthn}
Let $\mathcal T '\subseteq \mathcal T$  be a triangulated 
subcategory.		We will write $\mathcal T '^{\perp}$ for the full subcategory of
$\mathcal T$ consisting of the objects $E\in \mathcal T$ such that
for every $K\in \mathcal T '$:  $\Hom _{\mathcal T}(K,E)=0$.

If $\mathcal T '=Loc(\mathcal G')$ and $E\in \mathcal T '^{\perp}$,
we will say that $E$ is \emph{$\mathcal G'$-orthogonal}.
\end{defi} 

\subsection{Slice and orthogonal towers}  \label{subsec.slcos}
As in \eqref{subsec.somefacts}.
Consider a family of subsets of $\generators$: $\mathcal S=\{ \generators _{n} \} _{n
\in \mathbb Z}$
such that $\generators _{n+1}\subseteq \generators _{n}\subseteq \generators$ for
every $n\in \mathbb Z$.

Thus, we obtain a tower of full triangulated subcategories of $\mathcal T$:
\begin{equation}  \label{eq.genslitow}
\cdots \subseteq Loc(\generators _{n+1}) \subseteq Loc(\generators _{n})
\subseteq Loc(\generators _{n-1}) \subseteq \cdots
\end{equation}

We will call \eqref{eq.genslitow} the \emph{slice tower} determined by $\mathcal S$.  
The reason
for this terminology is \cite{MR1977582}, \cite{MR2249535}, \cite[p. 18]{MR2600283}.  
If we consider the orthogonal categories
$Loc(\generators _{n})^\perp$ \eqref{def.orthn}, we obtain a tower of full triangulated
subcategories of $\mathcal T$:
\begin{equation}  \label{eq.genslitow2}
\cdots \subseteq Loc(\generators _{n-1})^{\perp} \subseteq Loc(\generators _{n})^{\perp}
\subseteq Loc(\generators _{n+1})^{\perp} \subseteq \cdots
\end{equation} 
	
\subsubsection{Orthogonal covers} \label{def.birat.cover}		
	
Recall \cite[2.1.7(3)]{MR3614974} that
the inclusion,
		$ j_{n}: Loc(\generators _{n})^{\perp} \rightarrow \mathcal T$
	admits a right adjoint:
		\[p_{n}:\mathcal T \rightarrow Loc(\generators _{n})^{\perp},\]
	which is also a triangulated functor.		
We define $bc_{\leq n}=j_{n+1}\circ p_{n+1}$.  

	\subsubsection{}  \label{s.counit-properties}
The counit $bc_{\leq n}=j_{n+1}p_{n+1}
\stackrel{\theta_{n}}{\rightarrow} id$ of the adjunction
in \eqref{def.birat.cover}
satisfies the following universal property (by
an argument parallel to \cite[3.2.4]{MR3614974}): 
	
	For any $A$ in $\mathcal T$ and for any 
	$B\in Loc(\generators _{n+1})^{\perp}$,
	the map $\theta ^{A}_{n}: bc_{\leq n}A \rightarrow A$
	in $\mathcal T$ induces an isomorphism of
	abelian groups:
		\[\xymatrix{\Hom _{\mathcal T}(B, bc_{\leq n}A) 
		\ar[r]_-{\cong}^-{\theta ^{A}_{n\ast}}
		& 
			\Hom _{\mathcal T}(B, A) }
		\]

	\subsubsection{}  \label{subsec.bc.idemp}
Observe that by construction $bc_{\leq n}A$ is in 
$Loc(\generators _{n+1})^{\perp}$ 
\eqref{def.birat.cover} and in addition
$Loc(\generators _{n+1})^{\perp}\subseteq Loc(\generators _{n+2})^{\perp}$ \eqref{eq.genslitow2}.  Thus,
it follows from \eqref{s.counit-properties} that there
exists  a canonical natural transformation $bc_{\leq n}
\rightarrow bc_{\leq n+1}$ and that
$bc_{\leq n} \circ bc_{\leq n+1}\cong bc_{\leq n}$  
\cite[3.2.6]{MR3614974}
(the argument works for any compactly generated triangulated category).
Hence, for every $A$ in $\mathcal T$ there is a functorial tower in 
$\mathcal T$
\cite[3.2.14, 3.2.15]{MR3614974}:

\begin{align}	\label{A.birtow}
\begin{array}{c}
	\xymatrix@C=1.5pc{\cdots \ar[r]
					& \ar[dr]|{\theta _{n}^{A}} bc_{\leq n}A
					\ar[r] & \ar[d]|{\theta _{n+1}^{A}} 
					bc_{\leq n+1}A \ar[r] 
					&  \cdots  \ar[r]&
					\hocolim_{n\rightarrow \infty} bc_{\leq n}A \ar[dll]^-{c}\\						
					&&  A &&}
\end{array}
\end{align}
where all the triangles commute.
We will call \eqref{A.birtow} the orthogonal tower of $A$.

	\begin{defi} \label{def.abut.fil}
For $A$, $B$ in $\mathcal T$ we consider the increasing filtration
$F_\bullet$ on $\Hom _{\mathcal T}(B,A)$ (resp. 
$\Hom _{\mathcal T}(B, \hocolim_{n\rightarrow \infty} bc_{\leq n}A)$),
where $F_p$ is given
by the image of 
	\begin{align*}
\theta _{p\ast}^{A} &:\Hom _{\mathcal T}(B, bc_{\leq p}A) 
\rightarrow \Hom _{\mathcal T}(B, A)\\
\mathrm{(resp.}\; \;
\lambda _{p, \ast}^A &:\Hom _{\mathcal T}(B, bc_{\leq p}A)\rightarrow
\Hom _{\mathcal T}
(B, \hocolim_{n\rightarrow \infty} bc_{\leq n}A) \mathrm{)}	
	\end{align*}
where $\lambda _p^A: bc_{\leq p}
A \rightarrow \hocolim_{n\rightarrow \infty} bc_{\leq n}A
$ is the canonical map into the homotopy colimit
\eqref{A.birtow}.
	\end{defi}

	\subsection{The orthogonal spectral sequence}  \label{thm.birspecseq}
Let $A, B$ be in $\mathcal T$.  By an argument parallel to
 \cite[Thm. 3.2.8]{MR3614974} there exist
canonical triangulated functors $bc_{p/p-1}:\mathcal T \rightarrow 
\mathcal T$,
$p \in \mathbb Z$ which fit in a
natural distinguished triangle in $\mathcal T$:
	\begin{align} \label{dist.triang.ss}
bc_{\leq p-1}A\rightarrow bc_{\leq p}A\rightarrow bc_{p/p-1}A
	\end{align}	

 Then
\eqref{A.birtow} induces a spectral sequence of homological type
\cite[Thm. 3.2.16]{MR3614974}:
	\begin{align} \label{gen.spec.seq}
E^{1}_{p,q}=\Hom_{\mathcal T}(B, (bc_{p/p-1}A)[q-p])
\Rightarrow \Hom _{\mathcal T}(B,A)
	\end{align}
with differentials $d_r:E^{r}_{p,q}\rightarrow E^{r}_{p-r,q-r+1}$
and where the abutment is given by the associated graded 
group for the increasing filtration \eqref{def.abut.fil}
$F_{\bullet}$ of $\Hom _{\mathcal T} (B,A)$.
	 
Similarly, 
the horizontal row in \eqref{A.birtow}
induces a spectral sequence of homological type:
	\begin{align} \label{hocolim.spec.seq}
E^{1}_{p,q}=\Hom
_{\mathcal T}(B, (bc_{p/p-1}A)[q-p])
\Rightarrow \Hom _{\mathcal T}(B,\hocolim_{n\rightarrow \infty} bc_{\leq n}A)
	\end{align}
with exactly the same
differentials as \eqref{gen.spec.seq}
and where the abutment is given by the associated graded 
group for the increasing filtration \eqref{def.abut.fil}
$F_{\bullet}$ of \linebreak
$\Hom _{\mathcal T} (B,\hocolim\limits_{n\rightarrow \infty} bc_{\leq n}A)$.

Now, we observe that the map $c$ in \eqref{A.birtow}
induces a map of spectral sequences
$\eqref{hocolim.spec.seq} \rightarrow \eqref{gen.spec.seq}$ which is 
the identity on the
$E_1$-terms:

	\begin{align} \label{comp.spec.seq}
		\begin{array}{r}
\xymatrix@C=0.5pt{E^{1}_{p,q}=\Hom_{\mathcal T}(B, (bc_{p/p-1}A)[q-p]) 
	\ar@{=>}[rrr] \ar@{=}[d] &&&
	\Hom _{\mathcal T}(B, \hocolim\limits_{n\rightarrow \infty}
	bc_{\leq n}A) \ar[d]^-{c_\ast} \\
	E^{1}_{p,q}=\Hom_{\mathcal T}(B, (bc_{p/p-1}A)[q-p]) 
	\ar@{=>}[rrr] &&& \Hom _{\mathcal T}(B,A)}
		\end{array}
	\end{align}
	
\subsection{Voevodsky's triangulated category of motives} \label{def.DM}

We will only consider motives with $R$-coefficients, where
$R=\mathbb Z [\frac{1}{p}]$ and
$p$ is the exponential characteristic of the base field $k$.

Let $Cor _k$ be
the Suslin-Voevodsky category of finite correspondences over $k$,
i.e. the category with
the same objects as $Sm_{k}$
and morphisms $c(U,V)$ given by the $R$-module of finite
relative cycles on $U\times _{k}V$ over $U$ \cite{MR1764199}	 
with composition as in
\cite[p. 673 diagram (2.1)]{MR2804268}.  The graph of a morphism 
in $Sm_{k}$
induces a functor $\Gamma : Sm_{k}\rightarrow Cor_{k}$.  A Nisnevich sheaf with
transfers is an additive contravariant functor $\mathcal F$ from $Cor_{k}$ to the category
of $R$-modules 
such that the restriction $\mathcal F \circ \Gamma$ is a Nisnevich
sheaf.  Let $Shv^{tr}$ be the category of Nisnevich sheaves with
transfers which is an abelian category \cite[13.1]{MR2242284}.  
Given $X\in Sm_{k}$, we will write
$\mathbb Z _{tr}(X)$ for the Nisnevich sheaf with transfers represented by $X$
\cite[2.8 and 6.2]{MR2242284}.

Consider
the category of chain complexes (unbounded) on $Shv ^{tr}$,
$K(Shv ^{tr})$,  equipped
with the injective model structure \cite[Prop. 3.13]{MR1780498}, and let
$D(Shv ^{tr})$ be its homotopy category.  Let 
$K^{\mathbb A ^{1}}(Shv ^{tr})$ be
the left Bousfield localization \cite[3.3]{MR1944041} of $K(Shv ^{tr})$ with respect to the set
of maps
$\{ \mathbb Z _{tr}(X\times _{k}\mathbb A ^{1})[n]\rightarrow \mathbb Z _{tr}(X)[n]:
X\in Sm_{k}; n\in \mathbb Z\}$ induced by the projections $p:X\times _{k}\mathbb A ^{1}
\rightarrow X$. Voevodsky's  triangulated category of effective motives
$\DMeff$ is the homotopy category of $K^{\mathbb A ^{1}}(Shv ^{tr})$ \cite{MR1764202}.

Let $T\in K^{\mathbb A ^{1}}(Shv ^{tr})$ denote the chain complex of the from 
$\mathbb Z_{tr}(\Gm)[1]$ \cite[2.12]{MR2242284}, where $\Gm$ is the $k$-scheme
$\mathbb A ^1\backslash \{ 0\}$ pointed by $1$.  
We consider the category of symmetric
$T$-spectra on $K^{\mathbb A ^{1}}(Shv ^{tr})$,
$Spt_{T}(Shv ^{tr})$, equipped with the model structure
defined in \cite[8.7 and 8.11]{MR1860878}, \cite[Def. 4.3.29]{MR2438151}.  
Voevodsky's  triangulated category of  motives
$DM$ is the homotopy category of $Spt_{T}(Shv ^{tr})$ 
\cite{MR1764202}.  

We will write $M(X)$ for the image of $\mathbb Z _{tr}(X)\in D(Shv^{tr})$, $X\in Sm_{k}$
under the $\mathbb A ^{1}$-localization map $D(Shv ^{tr})\rightarrow \DMeff$.  Let
$\Sigma ^{\infty}:\DMeff \rightarrow DM$ be the suspension functor 
\cite[7.3]{MR1860878} (denoted by $F_0$ in \emph{loc.\!\,cit.}), 
we will abuse notation and simply write $E$ for $\Sigma
^{\infty}E$, $E\in \DMeff$.  Given a map $f:X\rightarrow Y$ in $Sm _k$, we will 
write $f:M(X)\rightarrow M(Y)$ for the map induced by $f$ in $DM$.

Notice that, $\DMeff$ and $DM$ are tensor triangulated categories 
\cite[Thm. 4.3.76 and Prop. 4.3.77]{MR2438151} with unit $\sphere = M(\spec{k})$. 
We will write $A(1)$ for $A\otimes \mathbb Z _{tr}(\Gm)[-1]$, $A\in DM$ and inductively
$A(n)=(A(n-1))(1)$, $n\geq 0$.  We observe that the functor $DM\rightarrow DM$,
$A\mapsto A(1)$ is an equivalence of categories \cite[8.10]{MR1860878}, 
\cite[Thm. 4.3.38]{MR2438151};
we will write $A\mapsto A(-1)$ for
its inverse, and inductively $A(-n)=(A(-n+1))(-1)$,
$n>0$.  By convention $A(0)=A$ for $A\in DM$.

\subsubsection{Generators}  \label{sub.gens}

It is well known that
$DM$ is a compactly generated triangulated category \eqref{subsec.somefacts} with
compact generators \cite[Thm. 4.5.67]{MR2438151}:
\begin{equation}  \label{eq.DMgens}
 \generators _{DM}=\{ M(X)(p): X\in Sm_{k}; p \in \mathbb Z\}.
\end{equation}

Let $\generators ^{\mathrm{eff}}\subseteq \generators _{DM}$ be the set consisting of compact objects of the form:
\begin{equation}  \label{eq.DMeffgens}
 \generators ^{\mathrm{eff}}=\{ M(X)(p): X\in Sm_{k}; p\geq 0\}.
\end{equation}

If $n\in \mathbb Z$, we will write $\generators ^{\mathrm{eff}}(n)\subseteq 
\generators _{DM}$ for the set consisting of compact objects of the form:
\begin{equation}  \label{eq.DMeffgenstw}
 \generators ^{\mathrm{eff}}(n)=\{ M(X)(p): X\in Sm_{k}; p\geq n\}.
\end{equation}

\subsubsection{}  \label{subsubsec.cancel}
By Voevodsky's cancellation theorem \cite{MR2804268}, 
the suspension functor 
$\Sigma ^{\infty}:\DMeff \rightarrow DM$ induces an equivalence of 
categories between 
$\DMeff$ and the full triangulated subcategory 
$Loc(\generators ^{\mathrm{eff}})$ of $DM$
\eqref{subsec.somefacts}.
We will abuse notation and write $\DMeff$ for 
$Loc(\generators ^{\mathrm{eff}})$.  Strictly speaking 
\cite[Cor. 4.10]{MR2804268} is only stated for perfect base fields,
but by the work of Suslin 
\cite[Cor. 4.13, Thm. 4.12 and Thm. 5.1]{MR3590347}
it follows that the result holds as well for non-perfect base fields.

\subsubsection{}  \label{subsubsec.neffDM}
We will write $\DMeff (n)$ for the full triangulated subcategory $Loc(\generators
^{\mathrm{eff}} (n))$ of $DM$ \eqref{subsec.somefacts}, and $\northogonal{n}$
for the orthogonal category $Loc(\generators^{\mathrm{eff}} (n))^{\perp}$
\eqref{def.orthn}.  Notice that $\DMeff (n)$ is compactly generated with set of generators
$\generators ^{\mathrm{eff}}(n)$ \cite[Thm. 2.1(2.1.1)]{MR1308405}.

\subsubsection{}  \label{subsec.birattow}  

We will consider \eqref{eq.genslitow}, \eqref{eq.genslitow2},
\eqref{A.birtow}, \eqref{def.abut.fil}, \eqref{gen.spec.seq},
\eqref{hocolim.spec.seq} and \eqref{comp.spec.seq} in $DM$
for the family $\mathcal S=\{ \generators ^{\mathrm{eff}}(n) \}_{n\in \mathbb Z}$
of subsets of $\generators _{DM}$ \eqref{eq.DMeffgenstw}.  	

\subsection{The Morel-Voevodsky $\mathbb A ^1$-stable homotopy
category}

We refer the reader to \cite[\S, Thm. 4.15]{MR1787949} for
the construction of the stable model structure on the category
of symmetric $T$-spectra.  We will write $\mathcal{SH}$
for its homotopy category, which is the Morel-Voevodsky
$\mathbb A ^1$-stable homotopy category.

Let $\Sigma _T ^{\infty} X_{+}\in
\mathcal{SH}$,
$X\in Sm_k$ denote the infinite suspension of the simplicial presheaf
represented by $X$ with a disjoint base point (written $F_0(X_+)$ in
\cite[p. 506]{MR1787949}).
By \cite[Prop. 4.19]{MR1787949}, $\mathcal {SH}$ is a tensor 
triangulated category with unit $\mathbf 1 =\Sigma _T ^\infty 
\mathrm{Spec}\; k_+$.  We will write $E(1)$ for $E\otimes 
\Sigma _T ^\infty (\Gm)[-1]$, $E\in \mathcal{SH}$ and inductively
$E(n)=(E(n-1))(1)$, $n\geq 0$.  We observe that the functor 
$\mathcal{SH}\rightarrow \mathcal{SH}$,
$E\mapsto E(1)$ is an equivalence of categories \cite[8.10]{MR1860878}, 
\cite[Thm. 4.3.38]{MR2438151};
we will write $E\mapsto E(-1)$ for
its inverse, and inductively $E(-n)=(E(-n+1))(-1)$,
$n>0$.  By convention $E(0)=E$ for $E\in \mathcal{SH}$.

As in the case of $DM$, it follows from
\cite[Thm. 4.5.67]{MR2438151} that $\mathcal{SH}$ is a 
compactly generated triangulated category \eqref{subsec.somefacts}
with compact generators:
\begin{equation}  \label{eq.SHgens}
 \generators _{\mathcal{SH}}=\{ 
 \Sigma _T ^{\infty}X_+(p): X\in Sm_{k}; p \in \mathbb Z\}.
\end{equation}

For $n\in \mathbb Z$, we will write 
$\generators ^{\mathrm{eff}}_{\mathcal{SH}}(n)\subseteq 
\generators _{\mathcal{SH}}$ 
for the set consisting of compact objects of the form:
\begin{equation}  \label{eq.SHeffgenstw}
 \generators ^{\mathrm{eff}}_{\mathcal{SH}}(n)=\{ 
 \Sigma _T ^{\infty}X_+(p): X\in Sm_{k}; p\geq n\}.
\end{equation}

\subsubsection{}  \label{subsubsec.neffSH}
Let $\mathcal{SH}^{\mathrm{eff}} (n)$ be the full triangulated 
subcategory $Loc(\generators
^{\mathrm{eff}} _{\mathcal{SH}}(n))$ of $\mathcal{SH}$ \eqref{subsec.somefacts}, and $\northogonalSH{n}$
be the orthogonal category 
$Loc(\generators^{\mathrm{eff}} _{\mathcal{SH}}(n))^{\perp}$
\eqref{def.orthn}.  Notice that $\mathcal{SH}^{\mathrm{eff}} (n)$ 
is compactly generated with set of generators
$\generators _{\mathcal{SH}}^{\mathrm{eff}}(n)$ 
\cite[Thm. 2.1(2.1.1)]{MR1308405}.

\subsubsection{}  \label{subsec.birattowSH}  

We will consider \eqref{eq.genslitow}, \eqref{eq.genslitow2},
\eqref{A.birtow}, \eqref{def.abut.fil}, \eqref{gen.spec.seq},
\eqref{hocolim.spec.seq} and \eqref{comp.spec.seq} in $\mathcal{SH}$
for the family $\mathcal S=\{ \generators 
_{\mathcal{SH}}^{\mathrm{eff}}(n) \}_{n\in \mathbb Z}$
of subsets of $\generators _{\mathcal{SH}}$ \eqref{eq.SHeffgenstw}. 
These were constructed in \cite{Pelaez:2014}.
	
\section{Orthogonality and Duality} \label{sec.orth-and-dual}

\subsection{} \label{subsec.orth.dual}

Recall that we are working with $\mathbb Z [\frac{1}{p}]$-coefficients
\eqref{def.DM}.  In this section we will consider
$Y\in Sm_k$ connected of dimension $d$, and $s$, 
$t\in \mathbb Z$.

	\begin{prop} \label{prop.rep.orth}
With the notation and conditions of \eqref{subsec.orth.dual}.
Then:
	\begin{align*}
M(Y)(s)[t] \in \northogonal{d+s+1}.
	\end{align*}
(see \eqref{subsubsec.neffDM} and \eqref{def.orthn}).
	\end{prop}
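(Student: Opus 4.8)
The plan is to show that for every generator $M(X)(p) \in \generators^{\mathrm{eff}}(d+s+1)$ (so $X \in Sm_k$ and $p \geq d+s+1$), we have $\Hom_{DM}(M(X)(p), M(Y)(s)[t]) = 0$, and then upgrade this vanishing on generators to vanishing on all of $Loc(\generators^{\mathrm{eff}}(d+s+1))$. The upgrade is formal: since $M(Y)(s)[t]$ being right-orthogonal is a condition closed under the formation of triangles, shifts, and arbitrary coproducts in the source variable (because $\Hom_{DM}(-, E)$ sends coproducts to products and triangles to long exact sequences), the full subcategory of objects $K$ with $\Hom_{DM}(K, M(Y)(s)[t]) = 0$ is a localizing subcategory; if it contains the generating set $\generators^{\mathrm{eff}}(d+s+1)$, it contains all of $\DMeff(d+s+1)$. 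So the whole content is the vanishing on generators.

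For the vanishing on generators, first I would use that the Tate twist $A \mapsto A(1)$ is an equivalence of categories on $DM$ (see \eqref{def.DM}), so $\Hom_{DM}(M(X)(p), M(Y)(s)[t]) \cong \Hom_{DM}(M(X)(p-s-1), M(Y)(-1)[t]) \cong \Hom_{DM}(M(X)(p-s), M(Y)[t])$; writing $m = p - s \geq d+1$, we are reduced to showing $\Hom_{DM}(M(X)(m), M(Y)[t]) = 0$ whenever $m > d = \dim Y$ and $X \in Sm_k$. Next I would reinterpret this Hom-group as a motivic cohomology / higher Chow group of $Y$. Since $M(Y)$ lies in $\DMeff$ and $Y$ is smooth, duality or the cancellation theorem lets one rewrite $\Hom_{DM}(M(X)(m), M(Y)[t])$ in terms of $\Hom_{DM}(M(X) \otimes (\text{something})(m - \dim Y)[\ast], \sphere)$; more concretely, I expect the cleanest route is to use that $\Hom_{DM}(M(X)(m), M(Y)[t])$, after tensoring with the dual of $M(Y)$ (which exists as $Y$ is smooth, though not necessarily proper — here one may need to reduce to an open of an affine space or use Gysin triangles to build $M(Y)$ from cells, or alternatively invoke the localization/Mayer–Vietoris structure), becomes a group of the form $\Hom_{DM}(M(Z)(m - d')[\ast], \sphere) = H^{\ast}_{\mathcal M}(Z, R(m - d'))$ with $m - d' > 0$ and lying outside the range where motivic cohomology with these weights is nonzero.

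The key vanishing input, which I expect to be the main obstacle to state cleanly, is the bound on the ``effectivity'' of $M(Y)$: for $Y$ smooth connected of dimension $d$, the object $M(Y)$ is ``$d$-effective from the other side'' in the sense that $\Hom_{DM}(M(X)(m), M(Y)[t]) = 0$ for all $m > d$ and all $t$. I would prove this by induction on $d$ using a stratification of $Y$: choose an open $U \subseteq Y$ that is a disjoint union of pieces admitting étale maps to $\affline^d$ (or simply is smooth affine of dimension $d$), with closed complement $W$ of dimension $< d$; the Gysin triangle $M(Y \setminus W) \to M(Y) \to M(W)(c)[2c] \to$ (with $c$ the codimension, $\dim W + c = d$) reduces the claim for $Y$ to the claim for the open piece and for $M(W)(c)[2c]$, and since $\dim W + c = d$ the twist by $c$ exactly compensates the drop in dimension, so by induction both outer terms satisfy the vanishing for $m > d$; the base case is $Y$ smooth of dimension $0$, i.e. a finite separable extension, where $M(Y)(m)$ for $m \geq 1$ maps trivially into $M(Y)$ by the computation of motivic cohomology of fields in negative weights (which vanishes). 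Care is needed because the Gysin triangle requires $Y \setminus W$ and $W$ smooth, so the stratification must be chosen with smooth strata, and one must also handle the affine-space case $M(\affline^d) \simeq M(\spec k)$, reducing to $d = 0$. Assembling these, the vanishing on generators follows, and hence $M(Y)(s)[t] \in \northogonal{d+s+1}$.
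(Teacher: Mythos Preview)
Your reduction to generators in the first paragraph is correct and matches the paper exactly. You are also right that the heart of the matter is the vanishing $\Hom_{DM}(M(X)(m),M(Y)[t])=0$ for $m>d=\dim Y$, and your instinct in the second paragraph to use duality is the right one.

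The gap is in your third paragraph. The Gysin/stratification induction does not close: after removing a smooth closed $W\subset Y$, the open complement $U=Y\setminus W$ is still $d$-dimensional, and you have no mechanism for reducing $U$ to something simpler. You propose choosing $U$ to admit an \'etale map to $\affline^d$, but an \'etale map does not make $M(U)$ isomorphic to $M(\affline^d)$, and a general smooth $d$-fold has no open piece whose motive is that of a point. Iterating the process only peels off lower-dimensional strata while leaving a $d$-dimensional open core untouched, so the induction on dimension never terminates. Your base case also already hides a duality step (for $0$-dimensional smooth proper schemes).

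The paper resolves exactly the worry you flag about non-proper $Y$ by using the motive with compact supports $M^c(Y)$. Voevodsky's duality theorem (and Kelly's extension in positive characteristic, after reducing to a perfect base field via Suslin) gives
\[
\Hom_{DM}(M(X)(a)[b],M(Y)(s)[t])\;\cong\;\Hom_{DM}\bigl(M(X)\otimes M^c(Y)(a-s-d)[b-t-2d],\,\sphere\bigr).
\]
Since $M^c(Y)\in\DMeff$ and $a-s-d\geq 1$, the source lies in $\DMeff(1)$, and the paper then invokes the already-established fact that $\sphere\in\northogonal{1}$. So the ``something'' you were looking for is precisely $M^c(Y)$, not a stratification; once you have that, the argument is a one-liner and no induction is needed.
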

	\begin{proof}
By \cite[2.1.2]{MR3614974} it suffices to show that 
$\Hom _{DM}(M(X)(a)[b],M(Y)(s)[t])=0$, for every $X\in Sm_k$,
$a$, $b\in \mathbb Z$ such that $a\geq d+s+1$.
So, by \cite[Cor. 4.13, Thm. 4.12 and Thm. 5.1]{MR3590347} we may
assume that the base field $k$ is perfect.
Now, if the base field $k$ admits resolution of singularities, it
follows from \cite[Thm. 4.3.7]{MR1764202}  that:
	\begin{align*}
\Hom _{DM}(M(X)(a)[b],M(Y)(s)[t]) \cong 
\Hom _{DM}(M(X)\otimes M^c(Y)(e)[f],\mathbf 1)
	\end{align*}
where $M^c(Y)\in \DMeff$ is the motive of Y with compact supports
 \cite[\S 4.1, Cor. 4.1.6]{MR1764202}, $e=a-s-d$ and $f=b-t-2d$.
 For a perfect base field of positive characteristic, we obtain the
 same conclusion by \cite[Thm. 5.5.14 and Lem. 5.5.6]{kelly}.
 
 Therefore, by \cite[5.1.1]{MR3614974} it suffices to check that
 $M(X)\otimes M^c(Y)(e)[f]\in \DMeff (1)$
 \eqref{subsubsec.neffDM}, which holds by hypothesis:
 $e=a-s-d\geq 1$.
	\end{proof}
	
	\begin{cor}  \label{cor.orth}
With the notation and conditions of \eqref{subsec.orth.dual}.  
Let $E=M(Y)(s)[t]\in DM$. Then:
	\begin{enumerate}
\item \label{cor.orth.a} The natural map \eqref{s.counit-properties}:
	\begin{align*}
\theta _{d+s}^{E}:bc_{\leq d+s}E\rightarrow E
	\end{align*}
is an isomorphism in $DM$.
\item \label{cor.orth.b} For any $A\in DM$, and any map
$f:E\rightarrow A$ in $DM$, there exists a unique lifting
$g:E\rightarrow bc_{\leq d+s}A$ such that the following
diagram commutes in $DM$:
	\begin{align} \label{eq.cor.orth.b}
	\begin{array}{c}
\xymatrix{ & bc_{\leq d+s}A \ar[d]^-{\theta _{d+s}^{A}}\\
	E \ar[r]_-{f} \ar[ur]^-{g}& A}
	\end{array}
	\end{align}
\item \label{cor.orth.c} The map $f$ in \eqref{eq.cor.orth.b} is
zero if and only if the map $g$ in \eqref{eq.cor.orth.b} is zero.
	\end{enumerate}
	\end{cor}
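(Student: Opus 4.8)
The plan is to deduce all three parts from Proposition \ref{prop.rep.orth} together with the universal property of the counit recorded in \ref{s.counit-properties}. The starting observation is that Proposition \ref{prop.rep.orth} gives $E = M(Y)(s)[t] \in \northogonal{d+s+1} = Loc(\generators^{\mathrm{eff}}(d+s+1))^{\perp}$, which in the notation of \ref{def.birat.cover} is precisely $Loc(\generators_{n+1})^{\perp}$ for the slice family $\mathcal S = \{\generators^{\mathrm{eff}}(n)\}_{n \in \mathbb Z}$ with $n = d+s$. Hence $E$ lies in the source category appearing in the universal property of \ref{s.counit-properties}, and this is the single fact that drives everything.

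For part \eqref{cor.orth.a}, I would apply \ref{s.counit-properties} with $A = B = E$ and $n = d+s$: the counit $\theta_{d+s}^{E} \colon bc_{\leq d+s}E \to E$ induces an isomorphism $\Hom_{DM}(E, bc_{\leq d+s}E) \xrightarrow{\cong} \Hom_{DM}(E, E)$. More usefully, one should apply the universal property with $A = E$ and $B$ ranging over all objects of $Loc(\generators^{\mathrm{eff}}(d+s+1))^{\perp}$, so that $\theta_{d+s}^{E}$ induces an isomorphism $\Hom_{DM}(B, bc_{\leq d+s}E) \xrightarrow{\cong} \Hom_{DM}(B, E)$ for every such $B$. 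Since both $bc_{\leq d+s}E$ (by \ref{subsec.bc.idemp}, it lies in $Loc(\generators^{\mathrm{eff}}(d+s+1))^{\perp}$) and $E$ (by the observation above) belong to this orthogonal subcategory, the Yoneda lemma inside that subcategory forces $\theta_{d+s}^{E}$ to be an isomorphism. I expect no real obstacle here; the only point to get right is matching the index shift so that $\northogonal{d+s+1}$ is the ``$n+1$'' category for $n = d+s$.

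For part \eqref{cor.orth.b}, existence and uniqueness of the lift $g$ is exactly the statement that $\theta_{d+s\,\ast}^{A} \colon \Hom_{DM}(E, bc_{\leq d+s}A) \to \Hom_{DM}(E, A)$ is a bijection, which is the universal property of \ref{s.counit-properties} applied with this $A$, with $B = E \in Loc(\generators^{\mathrm{eff}}(d+s+1))^{\perp}$, and $n = d+s$: $g$ is the preimage of $f$. For part \eqref{cor.orth.c}, since $\theta_{d+s\,\ast}^{A}$ is a bijection of abelian groups sending $g$ to $f = \theta_{d+s}^{A}\circ g$, it sends $0$ to $0$ and is injective, so $f = 0$ if and only if $g = 0$. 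The main (and only mild) obstacle in the whole argument is bookkeeping: confirming that the hypotheses of \ref{prop.rep.orth} ($Y$ connected of dimension $d$, with $\mathbb Z[\tfrac1p]$-coefficients) are in force in \eqref{subsec.orth.dual}, and that the orthogonal tower and counit in \ref{def.birat.cover}–\ref{s.counit-properties} are being used for exactly the family $\mathcal S = \{\generators^{\mathrm{eff}}(n)\}$ fixed in \ref{subsec.birattow}; once the indices are aligned, each part is a one-line consequence of the universal property.
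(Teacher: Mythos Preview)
Your proposal is correct and follows essentially the same route as the paper: both deduce everything from Proposition~\ref{prop.rep.orth} and the universal property in \ref{s.counit-properties}. The only cosmetic difference is that for part~\eqref{cor.orth.a} the paper cites \cite[3.2.7]{MR3614974} directly, whereas you spell out the underlying Yoneda argument (both $bc_{\leq d+s}E$ and $E$ lie in the full subcategory $\northogonal{d+s+1}$, so the counit being an isomorphism on $\Hom_{DM}(B,-)$ for all such $B$ forces it to be an isomorphism); these are the same argument.
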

	\begin{proof}
\eqref{cor.orth.a}: This follows directly by combining \eqref{prop.rep.orth}
with \cite[3.2.7]{MR3614974}.

\eqref{cor.orth.b} and \eqref{cor.orth.c} follow from \eqref{prop.rep.orth} and the
universal property of $\theta _{d+s}^A$ \eqref{s.counit-properties}.
	\end{proof}

\section{Convergence} \label{sec.conv}

\subsection{} \label{subsec.cond}
In this section we will consider objects $A$, $B\in DM$,
where $B$ is of the form $B=M(X)(s)[t]$ for 
$X\in Sm_k$ and  $s$, $t\in \mathbb Z$.

	\begin{thm} \label{thm.prel.conv}
With the notation and conditions of \eqref{subsec.cond}.  Then
the spectral sequence
\eqref{hocolim.spec.seq} is strongly convergent 
\cite[Def. 5.2(iii)]{MR1718076}.
	\end{thm}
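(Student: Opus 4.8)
The plan is to prove that the $E^{1}$-page of \eqref{hocolim.spec.seq} is concentrated in finitely many columns and that the filtration $F_{\bullet}$ of \eqref{def.abut.fil} on the abutment $\Hom_{DM}(B,\hocolim_{n\rightarrow\infty}bc_{\leq n}A)$ is finite; strong convergence in the sense of \cite[Def. 5.2(iii)]{MR1718076} then follows formally. Set $d=\krulldim X$. Two facts drive the argument. First, by \eqref{prop.rep.orth}, applied to the connected components of $X$ and using that $\northogonal{d+s+1}$ is closed under finite coproducts and shifts, the object $B[j]=M(X)(s)[t+j]$ lies in $\northogonal{d+s+1}$ for every $j\in\mathbb Z$. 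Second, $B=M(X)(s)[t]$ is compact in $DM$, since $M(X)(s)$ is a compact generator \eqref{sub.gens}.

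First I would bound the left-hand columns. If $p\leq s-1$ then $s\geq p+1$, hence $B\in\DMeff(p+1)$. On the other hand $bc_{\leq p-1}A$ and $bc_{\leq p}A$ lie in $\northogonal{p+1}$ by construction \eqref{subsec.bc.idemp} (together with $\northogonal{p}\subseteq\northogonal{p+1}$, see \eqref{eq.genslitow2}), so by the triangle \eqref{dist.triang.ss} their cofiber $bc_{p/p-1}A$ also lies in $\northogonal{p+1}$. As $\northogonal{p+1}=\DMeff(p+1)^{\perp}$, this gives $\Hom_{DM}(B,(bc_{\leq p}A)[m])=\Hom_{DM}(B,(bc_{p/p-1}A)[m])=0$ for every $m$; therefore $E^{1}_{p,q}=0$ and $F_{p}=0$ for all $p\leq s-1$ and all $q$.

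Next I would prove stabilization on the right. For every $m\geq d+s$ we have $B[j]\in\northogonal{d+s+1}\subseteq\northogonal{m+1}$, so the universal property of the counit \eqref{s.counit-properties} makes $\theta^{A}_{m}$ induce an isomorphism $\Hom_{DM}(B[j],bc_{\leq m}A)\xrightarrow{\cong}\Hom_{DM}(B[j],A)$. Since in the tower \eqref{A.birtow} the map $\theta^{A}_{p}$ factors as $bc_{\leq p}A\rightarrow bc_{\leq p+1}A\xrightarrow{\theta^{A}_{p+1}}A$, it follows that the transition map $\Hom_{DM}(B[j],bc_{\leq p}A)\rightarrow\Hom_{DM}(B[j],bc_{\leq p+1}A)$ is an isomorphism for all $p\geq d+s$ and all $j\in\mathbb Z$; substituting this into the long exact sequence of \eqref{dist.triang.ss} at any index $p\geq d+s+1$ gives $E^{1}_{p,q}=0$ there as well. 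Moreover, $B$ being compact, $\Hom_{DM}(B,\hocolim_{n\rightarrow\infty}bc_{\leq n}A)\cong\colim_{n}\Hom_{DM}(B,bc_{\leq n}A)$; by the preceding steps this colimit vanishes for $n\leq s-1$ and is already reached at $n=d+s$, so $F_{\bullet}$ is the finite filtration $0=F_{s-1}\subseteq F_{s}\subseteq\cdots\subseteq F_{d+s}=\Hom_{DM}(B,\hocolim_{n\rightarrow\infty}bc_{\leq n}A)$.

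It then remains to assemble these facts: the $E^{1}$-page is supported in the columns $s\leq p\leq d+s$, so every differential $d_{r}$ with $r\geq d+2$ has source or target outside that band and hence vanishes, whence $E^{\infty}=E^{d+2}$; combined with the finiteness of $F_{\bullet}$, this is precisely strong convergence. I expect the stabilization step to be the main obstacle, since this is where the hypothesis $B=M(X)(s)[t]$ enters essentially — via \eqref{prop.rep.orth} and the counit isomorphism \eqref{s.counit-properties} — and one must check carefully that the latter isomorphism propagates to the transition maps of the tower $bc_{\leq\bullet}A$; granting that, the remainder is the standard convergence criterion for a spectral sequence concentrated in finitely many columns.
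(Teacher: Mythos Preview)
Your argument is correct, but it takes a different and in one respect less economical route than the paper. The paper establishes only the \emph{left} vanishing $E^{1}_{p,q}=0$ for $p\leq s-1$ (using nothing more than $B\in\DMeff(s)$ and $bc_{\leq n}A\in\northogonal{n+1}$), checks that the filtration is exhaustive via compactness of $B$, and then invokes Boardman's half-plane criterion \cite[Thm.~6.1(a)]{MR1718076} for spectral sequences with exiting differentials. You instead prove a two-sided bound $s\leq p\leq d+s$, obtaining degeneration at $E^{d+2}$ and a finite filtration, and conclude strong convergence directly. The extra ingredient you need for the right-hand bound is Proposition~\ref{prop.rep.orth}, which is a duality statement specific to $DM$; the paper deliberately avoids this here, so that \eqref{thm.prel.conv} holds verbatim in any compactly generated triangulated category satisfying the setup of \eqref{subsec.slcos} (see \eqref{rmk.gen.trcat}) --- in particular in $\mathcal{SH}$, where it is reused in \S\ref{SH}. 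What your approach buys is an explicit finite range for the filtration and the $E^{1}$-page, and independence from Boardman's criterion; what the paper's approach buys is greater generality and the reservation of \eqref{prop.rep.orth} for the proof of the main theorem~\eqref{main.thm}, where it is genuinely needed.
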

	\begin{proof}
Since $B=M(X)(s)[t]$ is compact in $DM$
\eqref{sub.gens}, it follows from \cite[Lem. 2.8]{MR1308405} that
	\begin{align*}
\Hom_{DM}(B,\hocolim_{n\rightarrow \infty}bc_{\leq n}A)
\cong colim_{n\rightarrow \infty} \Hom_{DM}(B,bc_{\leq n}A)
	\end{align*}
which implies that the filtration \eqref{def.abut.fil}
$F_\bullet$ on $\Hom_{DM}(B,
\hocolim_{n\rightarrow \infty}bc_{\leq n}A)$ 
is exhaustive.

Now, we observe that $B \in \DMeff (s)$
for every $t\in \mathbb Z$ \eqref{subsubsec.neffDM}
and by construction
$bc_{\leq n}A \in \northogonal{n+1}$ \eqref{def.birat.cover}, 
so we deduce that
$\Hom _{DM} (B, bc_{\leq n}A)=0$ for all
$n\leq s-1$  and every $t\in \mathbb Z$ \eqref{def.orthn}.  
Hence, applying the distinguished triangle
\eqref{dist.triang.ss}  we conclude that $E^1_{p,q}=0$ for $p\leq s-1$.
Then, \cite[Thm. 6.1(a)]{MR1718076} implies that the spectral
sequence is strongly convergent since the differentials are of the
form $d_r:E^r_{p,q}\rightarrow E^r_{p-r,q-r+1}$ (notice that our
notation is homological while Boardman's is cohomological, see
\cite[(12.1) and Thm. 12.2]{MR1718076} for an explicit comparison).
	\end{proof}

	\begin{cor} \label{main.cor}
With the notation and conditions
 of \eqref{subsec.cond}.  Assume that the
canonical map
$c:\hocolim _{n \rightarrow \infty}bc_{\leq n}A \rightarrow A$
\eqref{A.birtow} induces an isomorphism of abelian groups:
	\begin{align*}
c_{\ast}: \Hom _{DM}(B,\hocolim _{n \rightarrow \infty}bc_{\leq n}A)
\stackrel{\cong}{\rightarrow} \Hom _{DM}(B,A).
	\end{align*}
Then the spectral sequence \eqref{gen.spec.seq} is strongly
convergent \cite[Def. 5.2(iii)]{MR1718076}.
	\end{cor}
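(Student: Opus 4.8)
The plan is to deduce the strong convergence of \eqref{gen.spec.seq} from that of \eqref{hocolim.spec.seq} --- established in \eqref{thm.prel.conv} --- by transporting it along the comparison of spectral sequences \eqref{comp.spec.seq} induced by $c$. The first step is to note that this comparison is an isomorphism on every page: by construction it is the identity on the $E_{1}$-terms, and the two spectral sequences carry literally the same differentials, so induction on $r$ (taking homology at each stage) shows that the induced map on $E_{r}$ is the identity for every $r\geq 1$, hence also on $E_{\infty}$. In particular \eqref{gen.spec.seq} and \eqref{hocolim.spec.seq} have the same $E_{\infty}$-terms and the same edge homomorphisms into their respective associated graded groups.

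The second step is to check that the induced map on abutments $c_{\ast}$ is a strict morphism of filtered abelian groups for the filtrations of \eqref{def.abut.fil}. Because all the triangles in the tower \eqref{A.birtow} commute, one has $c\circ\lambda_{p}^{A}=\theta_{p}^{A}$, hence $c_{\ast}\circ\lambda_{p,\ast}^{A}=\theta_{p,\ast}^{A}$, for every $p\in\mathbb{Z}$; passing to images, this says precisely that $c_{\ast}$ carries $F_{p}\Hom_{DM}(B,\hocolim_{n\rightarrow\infty}bc_{\leq n}A)$ onto $F_{p}\Hom_{DM}(B,A)$. Combined with the hypothesis that $c_{\ast}$ is an isomorphism, this yields an isomorphism $F_{p}\Hom_{DM}(B,\hocolim_{n\rightarrow\infty}bc_{\leq n}A)\xrightarrow{\cong}F_{p}\Hom_{DM}(B,A)$ for every $p$; in particular $\mathrm{gr}(c_{\ast})$ is an isomorphism, and the filtration on $\Hom_{DM}(B,A)$ is exhaustive, Hausdorff and complete because, by \eqref{thm.prel.conv}, the corresponding filtration on $\Hom_{DM}(B,\hocolim_{n\rightarrow\infty}bc_{\leq n}A)$ is.

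Assembling the two steps: under the identity on $E_{\infty}$-terms and the isomorphism $\mathrm{gr}(c_{\ast})$ on associated graded groups, the edge homomorphism of \eqref{hocolim.spec.seq} is carried to that of \eqref{gen.spec.seq}; since the former is an isomorphism by the strong convergence of \eqref{hocolim.spec.seq}, so is the latter, i.e. $E_{\infty}^{p}\xrightarrow{\cong}F_{p}\Hom_{DM}(B,A)/F_{p-1}\Hom_{DM}(B,A)$. Together with the exhaustiveness, Hausdorffness and completeness obtained in the second step, this is exactly strong convergence of \eqref{gen.spec.seq} in the sense of \cite[Def. 5.2(iii)]{MR1718076}. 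The main point requiring care is the second step --- reading off $c\circ\lambda_{p}^{A}=\theta_{p}^{A}$ from \eqref{A.birtow} and verifying that $c_{\ast}$ is \emph{strict} for the filtrations --- together with keeping track, as in the proof of \eqref{thm.prel.conv}, of the translation between Boardman's cohomological conventions and the homological ones used here.
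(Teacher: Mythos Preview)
Your argument is correct and is exactly the content of the paper's one-line proof (``Follows directly by combining \eqref{comp.spec.seq} with \eqref{thm.prel.conv}''), merely unpacked in detail: you spell out that the comparison \eqref{comp.spec.seq} is the identity on all pages, that $c_\ast$ is a filtered isomorphism via $c\circ\lambda_p^A=\theta_p^A$, and that strong convergence then transports from \eqref{hocolim.spec.seq} to \eqref{gen.spec.seq}.
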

	\begin{proof}
Follows directly by combining \eqref{comp.spec.seq} with
\eqref{thm.prel.conv}.
	\end{proof}

The following is the main theorem:

\begin{thm} \label{main.thm}
With the notation and conditions of \eqref{subsec.cond}.
Then the canonical map
\[ c:\hocolim _{n \rightarrow \infty}bc_{\leq n}A 
\stackrel{\cong}{\rightarrow} A
\]
is an isomorphism in $DM$.
Hence, the spectral sequence
\eqref{gen.spec.seq} is strongly convergent. 
\end{thm}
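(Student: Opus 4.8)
The plan is to show that the canonical map $c\colon\hocolim_n bc_{\leq n}A\to A$ is an isomorphism in $DM$ by checking that it induces an isomorphism on $\Hom_{DM}(G,-)$ for every compact generator $G\in\generators_{DM}$, i.e.\ for $G=M(X)(p)$ with $X\in Sm_k$ and $p\in\mathbb Z$; since $\generators_{DM}$ is a set of compact generators of $DM$, this suffices to conclude that $c$ is an isomorphism. So fix $X\in Sm_k$; without loss of generality we may take $X$ connected of dimension $d$, and we may write $G=M(X)(p)[q]$ (the shift $[q]$ is harmless and will be convenient). By \eqref{cor.orth}\eqref{cor.orth.a} applied to $E=G=M(X)(p)[q]$ — note $G$ has exactly the shape $M(Y)(s)[t]$ with $Y=X$, $s=p$, $t=q$, so the hypotheses of \eqref{subsec.orth.dual} are met — the counit $\theta_{d+p}^{G}\colon bc_{\leq d+p}G\to G$ is an isomorphism, and hence $G\in Loc(\generators^{\mathrm{eff}}(d+p+1))^{\perp}=\northogonal{d+p+1}$ by \eqref{prop.rep.orth}.

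Now I would compute both sides of $c_{\ast}\colon\Hom_{DM}(G,\hocolim_n bc_{\leq n}A)\to\Hom_{DM}(G,A)$. For the left-hand side, $G$ is compact in $DM$ \eqref{sub.gens}, so by \cite[Lem. 2.8]{MR1308405}
\[
\Hom_{DM}\bigl(G,\hocolim_{n\to\infty}bc_{\leq n}A\bigr)\cong\colim_{n\to\infty}\Hom_{DM}(G,bc_{\leq n}A).
\]
Along this colimit the transition maps are eventually isomorphisms: for $n\geq d+p$ we have $bc_{\leq n}A\in\northogonal{n+1}\subseteq\northogonal{d+p+1}$, and $G\in\northogonal{d+p+1}$ means $G$ lies in the orthogonal to $Loc(\generators^{\mathrm{eff}}(d+p+1))$; but more to the point, the universal property \eqref{s.counit-properties} applied with the object $bc_{\leq n}A\in Loc(\generators^{\mathrm{eff}}(n+1))^{\perp}$ in the role of ``$B$'' (taking the index to be $n$, so that $bc_{\leq n}A\in Loc(\generators^{\mathrm{eff}}(n+1))^{\perp}$) shows that for every $A'\in DM$ the map $\theta_{n}^{A'}$ induces $\Hom_{DM}(bc_{\leq n}A,bc_{\leq n}A')\xrightarrow{\cong}\Hom_{DM}(bc_{\leq n}A,A')$. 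The cleaner route, however, is to use \eqref{cor.orth}\eqref{cor.orth.b}--\eqref{cor.orth.c} directly: since $G=M(X)(p)[q]$ satisfies $G\in\northogonal{d+p+1}$, for the chosen index $m:=d+p$ the map $\theta_{m}^{A}\colon bc_{\leq m}A\to A$ induces a bijection $\Hom_{DM}(G,bc_{\leq m}A)\xrightarrow{\cong}\Hom_{DM}(G,A)$ by the universal property \eqref{s.counit-properties} (with $n=m$, noting $bc_{\leq m}A\in Loc(\generators^{\mathrm{eff}}(m+1))^{\perp}$ and $G\in Loc(\generators^{\mathrm{eff}}(m+1))^{\perp}$, the latter being exactly \eqref{prop.rep.orth}). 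Likewise, for every $n\geq m$ the map $\theta_{n}^{A}$ induces a bijection $\Hom_{DM}(G,bc_{\leq n}A)\xrightarrow{\cong}\Hom_{DM}(G,A)$ because $G\in\northogonal{d+p+1}\subseteq\northogonal{n+1}$, and these bijections are compatible with the transition maps $bc_{\leq n}A\to bc_{\leq n+1}A$ and with the maps to $A$ (by naturality of $\theta_\bullet$ and commutativity of \eqref{A.birtow}). Therefore the colimit $\colim_{n}\Hom_{DM}(G,bc_{\leq n}A)$ is computed by its eventually-constant tail and equals $\Hom_{DM}(G,A)$, with the identification being precisely $c_{\ast}$ composed with the canonical map $\lambda_{n,\ast}^{A}$. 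This shows $c_{\ast}$ is an isomorphism for all generators $G$, whence $c$ is an isomorphism by the standard generation argument.

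Finally, once $c$ is an isomorphism, the hypothesis of \eqref{main.cor} is satisfied for every $B$ as in \eqref{subsec.cond} (indeed $c_{\ast}$ is an isomorphism since $c$ is), so the spectral sequence \eqref{gen.spec.seq} is strongly convergent by \eqref{main.cor}, which rests on \eqref{thm.prel.conv}.

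The main obstacle is the second paragraph: one must argue carefully that the transition maps in the colimit $\colim_n\Hom_{DM}(G,bc_{\leq n}A)$ are isomorphisms for $n\gg 0$, and the key input making this work is the orthogonality statement \eqref{prop.rep.orth} — that $M(X)(p)[q]$ lands in $\northogonal{d+p+1}$ with an \emph{explicit} bound $d+p+1$ depending only on $X$ and $p$ (not on $A$). It is this uniform vanishing range, together with the universal property \eqref{s.counit-properties}, that forces the orthogonal tower of $A$ to stabilize when mapped out of a fixed compact generator, and hence makes $c$ an isomorphism. A subtlety worth spelling out is that the bound is uniform in the shift $q$ but depends on the Tate twist $p$, so the generators must be organized by the weight $p$; since every compact generator has a well-defined such $p$, this is not an obstruction, merely a bookkeeping point. (This is exactly the phenomenon that fails in $\mathcal{SH}$, where no analogue of \eqref{prop.rep.orth} holds — which is why the counterexamples \eqref{prop.1notconv}--\eqref{cor.notconv} exist there.)
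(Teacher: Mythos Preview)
Your proof is correct and follows essentially the same strategy as the paper: reduce to showing $c_\ast$ is an isomorphism on $\Hom_{DM}(M(Y)(a)[b],-)$ for every connected $Y\in Sm_k$, use compactness of $M(Y)(a)[b]$ to pass the $\Hom$ through the homotopy colimit, and then invoke the orthogonality bound \eqref{prop.rep.orth} together with the universal property \eqref{s.counit-properties}.

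The only difference is organizational. The paper treats surjectivity and injectivity of $c_\ast$ separately: surjectivity is the lifting property \ref{cor.orth}\eqref{cor.orth.b}, while for injectivity the paper factors a class through some $bc_{\leq n}A$ with $n\geq d+a$, then uses the idempotence $bc_{\leq d+a}\circ bc_{\leq n}\cong bc_{\leq d+a}$ to push it down to $bc_{\leq d+a}A$, and finally applies \ref{cor.orth}\eqref{cor.orth.c}. Your argument is slightly more direct: you observe that once $n\geq d+p$ the universal property \eqref{s.counit-properties} gives $\Hom_{DM}(G,bc_{\leq n}A)\xrightarrow{\;\cong\;}\Hom_{DM}(G,A)$ outright, so the inductive system of $\Hom$-groups is eventually constant and the colimit is identified with $\Hom_{DM}(G,A)$ via $c_\ast$ in one step. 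This bypasses the explicit use of the idempotence relation $bc_{\leq m}\circ bc_{\leq n}\cong bc_{\leq m}$ and packages surjectivity and injectivity together. Both routes rest on exactly the same inputs; yours is a clean repackaging. (One cosmetic remark: the aside about putting $bc_{\leq n}A$ ``in the role of $B$'' is a detour you immediately abandon, and could be deleted without loss.)
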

\begin{proof}
By \eqref{main.cor} it is enough to show that $c$
is an isomorphism in $DM$.  In order to prove this, 
it suffices to see \eqref{sub.gens}
that for every $a$, $b\in \mathbb Z$ and every
connected $Y\in Sm_k$ the induced map:
\[ c_{\ast}:\Hom_{DM}(M(Y)(a)[b],\hocolim _{n \rightarrow \infty}bc_{\leq n}A)
 \rightarrow \Hom _{DM}(M(Y)(a)[b],A)
\]
is an isomorphism of abelian groups.

First we show that $c_\ast$ is surjective. In
effect given $f:M(Y)(a)[b]\rightarrow A$ we obtain a lifting
by \ref{cor.orth}\eqref{cor.orth.b}:
\[ \xymatrix{ & bc_{\leq d+a}A \ar[d]^-{\theta _{d+a}} \\
	M(Y)(a)[b] \ar[r]_-{f} \ar[ur]& A}
\]
where $d$ is the dimension of $Y$.  Then the surjectivity follows by
\eqref{A.birtow}.

Finally we consider the injectivity.  Let
$f:M(Y)(a)[b]\rightarrow \hocolim _{n \rightarrow \infty}bc_{\leq n}A$ 
such that the composition $c_\ast (f):M(Y)(a)[b]\rightarrow A$
is zero.  Since $M(Y)(a)[b]$ is compact in $DM$ \eqref{sub.gens},
we conclude that \cite[Lem. 2.8]{MR1308405}:
\[ \Hom_{DM}(M(Y)(a)[b],\hocolim _{n \rightarrow \infty}bc_{\leq n}A)
 \cong \colim _{n \rightarrow \infty} \Hom _{DM}(M(Y)(a)[b],bc_{\leq n}A)
\]
Thus we may assume that $f$ factors as:
\[ \xymatrix{ & bc_{\leq n}A \ar[d]\\
	M(Y)(a)[b] \ar[r]_-{f} \ar[ur]^-{f^\prime}
	& \hocolim _{n \rightarrow \infty}bc_{\leq n}A}
\]
for some $n\geq d+a$.  Now, applying again \ref{cor.orth}\eqref{cor.orth.b} 
we may factor $f^\prime$ as follows:
\[ \xymatrix{ & bc_{\leq d+a}A\cong  bc_{\leq d+a}(bc_{\leq n}A)\ar[d] \\
	M(Y)(a)[b] \ar[r]_-{f^\prime} \ar[ur]^-{f^{''}}& bc_{\leq n}A}
\]
where the isomorphism follows from \cite[3.2.6]{MR3614974} since
$d+a\leq n$.
Thus it suffices to show that $f^{''}$ is zero.  But this follows from 
\ref{cor.orth}\eqref{cor.orth.c} since 
\[  \theta ^A_{d+a}\circ f^{''}=c_{\ast}(f)=0
\]
where the first equality follows from \eqref{A.birtow} and
the two commutative triangles above while the
second equality follows by hypothesis.
	\end{proof}
	
	\begin{rmk} \label{rmk.gen.trcat}
We observe that \eqref{thm.prel.conv} and \eqref{main.cor} hold 
for a compactly generated triangulated category $\mathcal T$
with compact generators $\generators$ and any choice
of a family of subsets of $\generators$: 
$\mathcal S=\{ \generators _{n} \} _{n
\in \mathbb Z}$ satisfying the conditions in \eqref{subsec.slcos}
and in addition $\cup _{n\in \mathbb Z}\; \generators _n =\generators$.

However, \eqref{main.thm} does not hold
for a general compactly generated triangulated category as
we will see in the next section.
	\end{rmk}

\section{The $\mathbb A ^1$-stable homotopy category} \label{SH}

\subsection{}
In this section we show that  \eqref{main.thm} does not hold for the 
sphere spectrum in $\mathcal{SH}$.  On the other hand,
we show that \eqref{main.thm} holds for objects in $\northogonalSH{n}$,
$n\in \mathbb Z$.  As a direct consequence we obtain
a spectral sequence converging to the $E_1$-term of
Voevodsky's slice spectral sequence.

\begin{prop} \label{prop.1notconv}
The canonical map $c:\hocolim _{n\rightarrow \infty}bc_{\leq n}\mathbf 1
\rightarrow \mathbf 1$ \eqref{A.birtow}
is not an isomorphism in $\mathcal{SH}$.
\end{prop}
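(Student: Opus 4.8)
The plan is to exhibit the failure of the isomorphism $c\colon \hocolim_{n\to\infty} bc_{\leq n}\mathbf 1 \to \mathbf 1$ by computing what happens after mapping a compact generator into it. Since $\mathbf 1 = \Sigma_T^\infty \mathrm{Spec}\,k_+$ lies in $\mathcal{SH}^{\mathrm{eff}}(0)$, the object $bc_{\leq n}\mathbf 1$ vanishes for $n \leq -1$ (because $bc_{\leq n}\mathbf 1 \in \northogonalSH{n+1}$ and $\mathbf 1$ is $\generators_{\mathcal{SH}}^{\mathrm{eff}}(0)$-built), so the tower is concentrated in degrees $n \geq 0$. The key point is that, unlike in $DM$, the orthogonal tower of $\mathbf 1$ in $\mathcal{SH}$ does not stabilize: there is no analogue of Proposition \ref{prop.rep.orth} forcing $\theta_N^{\mathbf 1}$ to be an isomorphism for large $N$. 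In fact the relevant input here is the Morel connectivity / stable connectivity theorem together with the computation of the homotopy sheaves of the sphere spectrum: the groups $\Hom_{\mathcal{SH}}(\mathbf 1, \mathbf 1(p)[p+q])$ — the homotopy of the sphere spectrum in all motivic weights — are nonzero in infinitely many weights $p$.

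The key steps, in order, are as follows. First I would pick a convenient compact object $B$ to test against — most naturally $B = \mathbf 1$ itself, or $B = \Sigma_T^\infty \mathrm{Spec}\,k_+$ which is the same thing. Using that $B$ is compact, $\Hom_{\mathcal{SH}}(B, \hocolim_n bc_{\leq n}\mathbf 1) \cong \colim_n \Hom_{\mathcal{SH}}(B, bc_{\leq n}\mathbf 1)$ by \cite[Lem. 2.8]{MR1308405}. Second, by the universal property in \eqref{s.counit-properties}, for each fixed weight the group $\Hom_{\mathcal{SH}}(B, bc_{\leq n}\mathbf 1)$ agrees with $\Hom_{\mathcal{SH}}(B, \mathbf 1)$ once $B$ lands in $\northogonalSH{n+1}$; but $B = \mathbf 1 \notin \northogonalSH{n+1}$ for any $n$, so one does not get the full answer this way, and the point is exactly that the filtration $F_\bullet$ on $\Hom_{\mathcal{SH}}(B,\mathbf 1)$ need not be exhaustive as a filtration of the colimit versus the actual Hom. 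Third — and this is the crux — I would show that the canonical map
\[
\colim_{n\to\infty}\Hom_{\mathcal{SH}}(\mathbf 1, bc_{\leq n}\mathbf 1) \longrightarrow \Hom_{\mathcal{SH}}(\mathbf 1, \mathbf 1)
\]
is not surjective (or not injective), by identifying the left-hand side with a "weight-bounded" piece of the bigraded homotopy ring of the sphere and exhibiting a class of unbounded weight in the target that is not hit. Concretely, the slice/effectivity filtration of $\mathbf 1$ interacts with the orthogonal tower so that $bc_{\leq n}\mathbf 1$ only sees contributions from weights $\leq$ roughly $n$, whereas $\Hom_{\mathcal{SH}}(\mathbf 1,\mathbf 1)=\pi_{0,0}(\mathbf 1) = GW(k)$ together with the existence of nonzero elements detected in arbitrarily high weight (e.g. powers of the motivic Hopf map $\eta$, or Milnor–Witt $K$-theory classes, which live in $\Hom_{\mathcal{SH}}(\mathbf 1,\mathbf 1(n)[n])$ for all $n$) obstructs the colimit from surjecting.

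The main obstacle I anticipate is making the third step precise: one must actually pin down, for the concrete generator family $\mathcal S = \{\generators_{\mathcal{SH}}^{\mathrm{eff}}(n)\}$, how far $bc_{\leq n}\mathbf 1$ differs from $\mathbf 1$, i.e. control $\Hom_{\mathcal{SH}}(\mathbf 1, bc_{\leq n}\mathbf 1)$ well enough to see a specific class in $\Hom_{\mathcal{SH}}(\mathbf 1,\mathbf 1)$ that is missed in the colimit. I expect this is done not by a head-on computation of $bc_{\leq n}\mathbf 1$ but by a clean structural argument: map in a well-chosen compact object (perhaps $\Sigma_T^\infty(\Gm^{\wedge m})$ for varying $m$, or a Tate-twisted sphere $\mathbf 1(m)$) for which the left adjoint/effectivity behavior is transparent, observe that $\Hom_{\mathcal{SH}}(\mathbf 1(m)[m], bc_{\leq n}\mathbf 1) = 0$ whenever $m > n$ by orthogonality, yet $\Hom_{\mathcal{SH}}(\mathbf 1(m)[m], \mathbf 1) = K^{MW}_{-m}(k) \neq 0$ for all $m \geq 0$ (and for suitable $k$, say $k=\mathbb{R}$ or any field where Milnor–Witt $K$-theory is nonzero in the relevant degree). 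Taking the colimit over $n$ of a sequence of zero groups cannot surject onto a nonzero group, so $c$ is not an isomorphism. The delicate bookkeeping will be the exact indexing ($m$ versus $n+1$, the shift conventions between the homological grading of the tower and the bigrading of $\mathcal{SH}$) and verifying the nonvanishing of the target group over whatever base field one fixes; but conceptually this is the contrast with $DM$, where Proposition \ref{prop.rep.orth} guarantees that such twisted spheres become orthogonal to $\DMeff(n)$ only past a bound depending on dimension, while in $DM$ the motive of a point is genuinely effective-bounded and the tower stabilizes — in $\mathcal{SH}$ it does not.
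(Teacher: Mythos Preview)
Your concrete argument at the end has a genuine gap. You correctly observe that $\Hom_{\mathcal{SH}}(\mathbf 1(m)[m], bc_{\leq n}\mathbf 1)=0$ whenever $m>n$, since $\mathbf 1(m)[m]\in \mathcal{SH}^{\mathrm{eff}}(m)\subseteq \mathcal{SH}^{\mathrm{eff}}(n+1)$ and $bc_{\leq n}\mathbf 1\in \northogonalSH{n+1}$. But then you write ``Taking the colimit over $n$ of a sequence of zero groups cannot surject onto a nonzero group.'' The sequence is \emph{not} a sequence of zero groups: for a fixed $m$ the colimit is over all $n\to\infty$, and once $n\geq m$ the orthogonality argument no longer applies. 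Those later terms $\Hom_{\mathcal{SH}}(\mathbf 1(m)[m], bc_{\leq n}\mathbf 1)$ are completely uncontrolled by what you have written, and there is no reason the colimit should fail to hit $K^{MW}_{-m}(k)$. To salvage this line you would need an actual computation of those groups for large $n$, which you have not supplied (and which is not easy, since $\mathbf 1(m)[m]$ is far from lying in $\northogonalSH{n+1}$, so the universal property \eqref{s.counit-properties} does not help). The earlier claim that $bc_{\leq n}\mathbf 1=0$ for $n\leq -1$ is also not justified: knowing $\mathbf 1\in\mathcal{SH}^{\mathrm{eff}}(0)$ does not by itself force the right adjoint $p_{n+1}$ to kill it.

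The paper's proof sidesteps all of this with a one-line retract trick. Assume $c$ is an isomorphism; since $\mathbf 1$ is compact, $\Hom_{\mathcal{SH}}(\mathbf 1,\hocolim_n bc_{\leq n}\mathbf 1)\cong\colim_n\Hom_{\mathcal{SH}}(\mathbf 1,bc_{\leq n}\mathbf 1)$, so the identity $\mathrm{id}_{\mathbf 1}$ factors through some $bc_{\leq n}\mathbf 1$. Hence $\mathbf 1$ is a retract of $bc_{\leq n}\mathbf 1\in\northogonalSH{n+1}$, and since orthogonal subcategories are closed under summands, $\mathbf 1\in\northogonalSH{n+1}$. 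That forces Voevodsky's slices $s_m\mathbf 1$ to vanish for all $m\geq n$, contradicting the known nonvanishing of the slices of the sphere spectrum. Note that the input here is purely qualitative (infinitely many slices of $\mathbf 1$ are nonzero), not an explicit Milnor--Witt class; your instinct that ``something in unbounded weight'' is the obstruction is morally right, but the clean formulation is via the retract-and-slices argument rather than chasing a specific element through the colimit.
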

\begin{proof}
We proceed by contradiction, and assume that $c$ is an isomorphism
in $\mathcal{SH}$.  Then, since $\mathbf 1 \in \mathcal{SH}$
is compact we conclude \cite[Lem. 2.8]{MR1308405}:
	\begin{align*}
\xymatrix{
\colim _{n\rightarrow \infty} \Hom 
_{\mathcal{SH}}(\mathbf 1 , bc_{\leq n}\mathbf 1) \cong
\Hom _{\mathcal{SH}}(\mathbf 1 ,\hocolim _{n\rightarrow \infty}
bc_{\leq n}\mathbf 1)
\ar[r]^-{c_{\ast}}_-{\cong}&
\Hom _{\mathcal{SH}}(\mathbf 1, \mathbf 1)}
	\end{align*}
Thus, for some $n\in \mathbb Z$ the identity map for $\mathbf 1$
factors as in the following commutative diagram in
$\mathcal{SH}$:
	\begin{align*}
\xymatrix{& bc_{\leq n}\mathbf 1 \ar[d]^-{\theta_{n}^{\mathbf 1}}\\
\mathbf 1 \ar[r]_-{id} \ar[ur]& \mathbf 1}	
	\end{align*}
Hence, $\mathbf 1 \oplus E \cong bc_{\leq n}\mathbf 1
\in \northogonalSH{n+1}$  \eqref{def.birat.cover} for some
$E\in \mathcal{SH}$.  Since $\northogonalSH{n+1}$ is closed
under direct summands \eqref{def.orthn}, we deduce that
$\mathbf 1 \in \northogonalSH{n+1}$.  But this is a contradiction
since it implies that for every $m\geq n$, the slice functors
of Voevodsky \cite[Thm. 2.2]{MR1977582} vanish $s_m\mathbf 1=0$ for
the sphere spectrum, which is not the case
\cite[Conj. 9]{MR1977582}, \cite[p. 350]{MR3217623}.
\end{proof}

	\begin{rmk}  \label{rmk.notconv}
The argument above shows that
\eqref{prop.1notconv} holds for any compact object $A\in \mathcal{SH}$
such that for every $n\in \mathbb Z$ there exists $m\geq n$ with
$s_m A\neq 0$.
	\end{rmk}
	
	\begin{cor}  \label{cor.notconv}
Consider the spectral sequence \eqref{gen.spec.seq} in $\mathcal{SH}$ 
for $A=\mathbf 1$.  Then the spectral sequence is not strongly
convergent for every $B=\Sigma _T ^{\infty} X_+(s)[t]$, $X\in Sm_k$,
$s$, $t\in \mathbb Z$.
	\end{cor}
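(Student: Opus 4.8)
The statement to prove is Corollary \ref{cor.notconv}: for $A=\mathbf 1$ in $\mathcal{SH}$ and any $B=\Sigma_T^\infty X_+(s)[t]$, the spectral sequence \eqref{gen.spec.seq} fails to be strongly convergent. The natural strategy is to contrapose against Corollary \ref{main.cor} (which holds in $\mathcal{SH}$ by Remark \ref{rmk.gen.trcat}, since $\cup_{n}\generators^{\mathrm{eff}}_{\mathcal{SH}}(n)=\generators_{\mathcal{SH}}$): if the spectral sequence \emph{were} strongly convergent, then — after checking that strong convergence forces the abutment to really be $\Hom_{\mathcal{SH}}(B,A)$ with the filtration of \eqref{def.abut.fil} — the comparison map of spectral sequences \eqref{comp.spec.seq} from \eqref{hocolim.spec.seq} to \eqref{gen.spec.seq} would have to be an isomorphism on abutments, since it is the identity on $E_1$ and both are strongly convergent. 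Thus $c_\ast:\Hom_{\mathcal{SH}}(B,\hocolim_n bc_{\leq n}\mathbf 1)\to\Hom_{\mathcal{SH}}(B,\mathbf 1)$ would be an isomorphism for every such $B$. But taking $B$ to run over the compact generators $\Sigma_T^\infty X_+(p)$ is exactly what is needed to conclude, by the standard detection-by-compact-generators criterion, that $c$ itself is an isomorphism in $\mathcal{SH}$ — contradicting Proposition \ref{prop.1notconv}.

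The steps, in order, are: (1) Reduce to showing that strong convergence of \eqref{gen.spec.seq} for the single object $B=\Sigma_T^\infty X_+(s)[t]$ forces $c_\ast$ to be an isomorphism for that $B$. (2) Observe that \eqref{hocolim.spec.seq} is strongly convergent by the argument of Theorem \ref{thm.prel.conv} transported to $\mathcal{SH}$ via Remark \ref{rmk.gen.trcat} (compactness of $B$ gives exhaustiveness, and $E^1_{p,q}=0$ for $p\leq s-1$ gives Boardman's criterion \cite[Thm. 6.1(a)]{MR1718076}). (3) Since both spectral sequences in \eqref{comp.spec.seq} are strongly convergent and the map between them is the identity on $E_1$, it induces an isomorphism between the associated graded groups of the two filtered abutments compatibly with $c_\ast$; strong convergence (in particular Hausdorffness and exhaustiveness of the filtrations) then upgrades this to $c_\ast$ being an isomorphism. (4) Now allow $X$ and the twist to vary: for \emph{every} $X\in Sm_k$ and $p\in\mathbb Z$, $B=\Sigma_T^\infty X_+(p)$ is of the permitted form $\Sigma_T^\infty X_+(s)[t]$ with $t=0$, so $c_\ast$ is an isomorphism on all compact generators, whence $c$ is an isomorphism in $\mathcal{SH}$. (5) This contradicts Proposition \ref{prop.1notconv}.

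The main obstacle is step (3): one must be careful that "strongly convergent" in the sense of \cite[Def. 5.2(iii)]{MR1718076} really does imply that a morphism of strongly convergent spectral sequences which is an isomorphism on some page (here $E_1$) induces an isomorphism on the (filtered) abutments. This is where Boardman's machinery must be invoked precisely — strong convergence means the filtration is exhaustive, Hausdorff, and complete, and that $E_\infty$ is the associated graded; an isomorphism on $E_1$ gives an isomorphism on all later pages and hence on $E_\infty$, and then a standard filtered-colimit/limit argument (using exhaustiveness and completeness on both sides) promotes this to an isomorphism of abutments. One subtlety worth flagging: the abutment of \eqref{gen.spec.seq} is \emph{a priori} only the associated graded of a filtration on $\Hom_{\mathcal{SH}}(B,A)$, and the content of "strong convergence" is precisely that no information is lost in passing to the graded object; it is exactly this that makes the comparison bite. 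Once step (3) is in place, steps (4)–(5) are immediate, and the only genuinely external input is Proposition \ref{prop.1notconv}, already proved.
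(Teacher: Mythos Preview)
Your argument is correct and follows essentially the same route as the paper: assume by contradiction strong convergence for all such $B$, combine the comparison \eqref{comp.spec.seq} with the $\mathcal{SH}$-version of \eqref{thm.prel.conv} (via \eqref{rmk.gen.trcat}) to force $c_\ast$ to be an isomorphism for every compact generator, conclude that $c$ is an isomorphism in $\mathcal{SH}$, and contradict \eqref{prop.1notconv}; your explicit unpacking of step~(3) is exactly what the paper's phrase ``combining \eqref{comp.spec.seq} and \eqref{thm.prel.conv}'' leaves implicit. One small point of phrasing: both the paper's proof and yours establish only that strong convergence fails for \emph{some} such $B$ (the negation of ``for every $B$''), not that it fails for each individual $B$ as your opening sentence suggests.
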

\begin{proof}
We proceed by contradiction and assume that the spectral
sequence is strongly convergent for every $B$ as above.
Then combining
\eqref{comp.spec.seq} and \eqref{thm.prel.conv} (which also holds
in $\mathcal{SH}$ \eqref{rmk.gen.trcat}) we conclude that
	\begin{align*}
c_\ast: \Hom _{\mathcal{SH}}(B, \hocolim _{n\rightarrow \infty} bc_{\leq n}
\mathbf 1)\rightarrow \Hom _{\mathcal{SH}}(B, \mathbf 1)
	\end{align*}
is an isomorphism.  But this implies that $c$ is an isomorphism since
$\mathcal{SH}$ is a compactly generated category with generators
$\generators _{\mathcal{SH}}$ \eqref{eq.SHgens}.
\end{proof}

However, the spectral sequence \eqref{gen.spec.seq} is
strongly convergent for a large class of objects in $\mathcal{SH}$:

	\begin{prop}  \label{prop.orth.conv}
Let $A\in \mathcal{SH}$ such that for some $r\in \mathbb Z$,
$A \in \northogonalSH{r}$ \eqref{subsubsec.neffSH}. Then the canonical map
\[ c:\hocolim _{n \rightarrow \infty}bc_{\leq n}A 
\stackrel{\cong}{\rightarrow} A
\]
is an isomorphism in $\mathcal{SH}$.
Hence, the spectral sequence
\eqref{gen.spec.seq} is strongly convergent for every
$B=\Sigma _T ^{\infty} X_+(s)[t]$,
$X\in Sm_k$, $s$, $t\in \mathbb Z$.
	\end{prop}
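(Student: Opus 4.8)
The plan is to mimic the proof of Theorem~\ref{main.thm}, replacing the duality input \eqref{prop.rep.orth} by the hypothesis $A\in\northogonalSH{r}$. By \eqref{main.cor} (which holds in $\mathcal{SH}$ by \eqref{rmk.gen.trcat}) it suffices to show that $c:\hocolim_{n\rightarrow\infty}bc_{\leq n}A\rightarrow A$ is an isomorphism in $\mathcal{SH}$, and since $\mathcal{SH}$ is compactly generated with generators $\generators_{\mathcal{SH}}$ \eqref{eq.SHgens}, it is enough to check that $c_\ast:\Hom_{\mathcal{SH}}(\Sigma_T^\infty X_+(p)[q],\hocolim_{n}bc_{\leq n}A)\rightarrow\Hom_{\mathcal{SH}}(\Sigma_T^\infty X_+(p)[q],A)$ is an isomorphism for all $X\in Sm_k$, $p,q\in\mathbb Z$. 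The key point is to replace the use of \ref{cor.orth}\eqref{cor.orth.b}--\eqref{cor.orth.c} (valid for representable $B$) with the universal property \eqref{s.counit-properties} applied directly to $A$: since $A\in\northogonalSH{r}\subseteq\northogonalSH{n+1}$ for every $n\geq r-1$ \eqref{eq.genslitow2}, the counit $\theta_n^A:bc_{\leq n}A\rightarrow A$ induces an isomorphism $\Hom_{\mathcal{SH}}(C,bc_{\leq n}A)\cong\Hom_{\mathcal{SH}}(C,A)$ for any $C\in Loc(\generators^{\mathrm{eff}}_{\mathcal{SH}}(n+1))$ — in particular for $C=\Sigma_T^\infty X_+(p)[q]$ whenever $p\geq n+1$, i.e. whenever $n\leq p-1$.

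Thus I would argue as follows. Fix $X$, $p$, $q$. For surjectivity of $c_\ast$: given $f:\Sigma_T^\infty X_+(p)[q]\rightarrow A$, choose $n\geq\max(r-1,p-1)$; wait, I actually want to use the universal property in the direction where $\Sigma_T^\infty X_+(p)[q]\in Loc(\generators^{\mathrm{eff}}_{\mathcal{SH}}(n+1))$, which forces $n+1\leq p$, i.e. $n\leq p-1$; but I also need $A\in\northogonalSH{n+1}$, i.e. $n+1\geq r$, i.e. $n\geq r-1$. These are compatible only if $r-1\leq p-1$, i.e. $p\geq r$. So the clean statement is: \emph{when $p\geq r$}, choose any $n$ with $r-1\leq n\leq p-1$; then $\theta_n^A$ is invertible on $\Hom_{\mathcal{SH}}(\Sigma_T^\infty X_+(p)[q],-)$, hence $f$ lifts uniquely through $bc_{\leq n}A$, hence through $\hocolim$, giving a preimage under $c_\ast$, and uniqueness of the lift together with compactness (\cite[Lem.~2.8]{MR1308405}) gives injectivity too. \emph{When $p<r$}, I claim both source and target vanish: indeed $\Sigma_T^\infty X_+(p)[q]\in\mathcal{SH}^{\mathrm{eff}}(p)\subseteq\mathcal{SH}^{\mathrm{eff}}(?)$ — here I need $\Hom_{\mathcal{SH}}(\Sigma_T^\infty X_+(p)[q],A)=0$, which follows because $A\in\northogonalSH{r}$ and $p< r$ does \emph{not} immediately give this since $\generators^{\mathrm{eff}}_{\mathcal{SH}}(r)\subseteq\generators^{\mathrm{eff}}_{\mathcal{SH}}(p)$ goes the wrong way. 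Let me reconsider: $A\in\northogonalSH{r}$ means $\Hom(C,A)=0$ for $C\in Loc(\generators^{\mathrm{eff}}_{\mathcal{SH}}(r))$; and $\Sigma_T^\infty X_+(p)[q]$ lies in that subcategory iff $p\geq r$. So the vanishing argument for $p<r$ is wrong. Instead, for $p<r$ I should use that $bc_{\leq n}A\in\northogonalSH{n+1}$ and, since $A\in\northogonalSH{r}$, compare; this is exactly the orthogonal tower structure, and what one actually shows is that for $n\geq r-1$ the map $\theta_n^A$ is an isomorphism (not just an iso after $\Hom(C,-)$ for special $C$), so $c$ itself is an isomorphism directly.

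Let me restructure the plan more honestly. The cleanest route: show $\theta_n^A:bc_{\leq n}A\rightarrow A$ is an isomorphism in $\mathcal{SH}$ for every $n\geq r-1$. Granting this, the tower \eqref{A.birtow} is eventually constant with value $A$, so $\hocolim_n bc_{\leq n}A\cong A$ and $c$ is an isomorphism, finishing via \eqref{main.cor}. To prove $\theta_n^A$ is an isomorphism, complete it to a distinguished triangle $K\rightarrow bc_{\leq n}A\xrightarrow{\theta_n^A}A$; then $bc_{\leq n}A\in\northogonalSH{n+1}$ by construction \eqref{def.birat.cover}, and $A\in\northogonalSH{r}\subseteq\northogonalSH{n+1}$ since $n+1\geq r$ \eqref{eq.genslitow2}; as $\northogonalSH{n+1}$ is a triangulated subcategory, $K\in\northogonalSH{n+1}$ as well. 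On the other hand, by the defining property of the right adjoint $p_{n+1}$ \eqref{def.birat.cover} (equivalently, the universal property \eqref{s.counit-properties}), the fibre $K$ lies in $Loc(\generators^{\mathrm{eff}}_{\mathcal{SH}}(n+1))$ — this is the standard fact that for a localization sequence the fibre of the colocalization counit lies in the subcategory being killed. Hence $K\in Loc(\generators^{\mathrm{eff}}_{\mathcal{SH}}(n+1))\cap Loc(\generators^{\mathrm{eff}}_{\mathcal{SH}}(n+1))^\perp=0$, so $\theta_n^A$ is an isomorphism. The main obstacle is making precise and citing the assertion that $K\in Loc(\generators^{\mathrm{eff}}_{\mathcal{SH}}(n+1))$: this is the structural input about the pair $(Loc(\generators^{\mathrm{eff}}_{\mathcal{SH}}(n+1)),\northogonalSH{n+1})$ forming a (co)localization, which in the motivic setting is \cite[2.1.7, 3.2.x]{MR3614974} / \cite{Pelaez:2014}; once that is in hand, everything else is formal. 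The very last sentence of the proposition then follows from \eqref{main.cor} together with \eqref{thm.prel.conv} (valid in $\mathcal{SH}$ by \eqref{rmk.gen.trcat}), applied to $B=\Sigma_T^\infty X_+(s)[t]$.
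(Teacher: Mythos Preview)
Your restructured argument (after the abandoned generator-by-generator detour) is correct and is essentially the paper's proof: both show that $\theta_n^A$ is an isomorphism for all $n\geq r-1$, hence the tower \eqref{A.birtow} stabilizes at $A$ and $c$ is an isomorphism, with strong convergence then following from \eqref{main.cor} and \eqref{rmk.gen.trcat}. The only difference is that the paper dispatches ``$\theta_n^A$ is an isomorphism when $A\in\northogonalSH{n+1}$'' in one line by citing the universal property \eqref{s.counit-properties} together with \cite[2.3.7]{MR3614974}, whereas you reprove this fact via the fiber argument $K\in Loc(\generators^{\mathrm{eff}}_{\mathcal{SH}}(n+1))\cap\northogonalSH{n+1}=0$; the input you flag as needing a citation (that the fiber of the counit lies in $Loc(\generators^{\mathrm{eff}}_{\mathcal{SH}}(n+1))$) is exactly what is packaged in \cite[2.1.7]{MR3614974}.
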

	\begin{proof}
By \eqref{main.cor} (which holds as well
in $\mathcal{SH}$ \eqref{rmk.gen.trcat})
it is enough to show that $c$
is an isomorphism in $\mathcal{SH}$.

Let $m\geq r$ be an arbitrary integer.
It follows from \eqref{eq.genslitow2} that
$A\in \northogonalSH{m}$, so by the universal
property \eqref{s.counit-properties} we conclude that
$\theta _m^A:bc_{\leq m}A \rightarrow A$ is an isomorphism
in $\mathcal{SH}$ (see \cite[2.3.7]{MR3614974}).  
But this implies that  the canonical map
$c:\hocolim _{n\rightarrow \infty}bc_{\leq n}A\rightarrow A$
\eqref{A.birtow}
is an isomorphism in $\mathcal{SH}$.
	\end{proof}
	
\subsubsection{} \label{subsubsec.Voevslice}
Let $B\in \mathcal{SH}$ with
$B=\Sigma _T ^{\infty} X_+(s)[t]$, $X\in Sm_k$,
$s$, $t\in \mathbb Z$.  Consider Voevodsky's 
slice spectral sequence \cite[\S 7]{MR1977582} for
$G\in \mathcal{SH}$:
	\begin{align*}
E_1^{m,n}=\Hom _{\mathcal{SH}}(B, s_mG [m+n])\Rightarrow
\Hom _{\mathcal{SH}}(B,G)
	\end{align*}
A direct consequence of \eqref{prop.orth.conv} is the fact that
we obtain a spectral sequence which convergences strongly
to the $E_1$-term of Voevodsky's slice spectral sequence
and which is compatible with the differentials
$d_1:E_1^{m,n}\rightarrow E_1^{m+1,n}$:
	
	\begin{cor} \label{cor.Voev.slice}
With the notation and conditions of \eqref{subsubsec.Voevslice}.
Then the spectral sequence \eqref{gen.spec.seq} for
$A=s_mG[m+n]$ converges strongly to the $E_1$-term
of Voevodsky's slice spectral sequence  
$E_1^{m,n}=\Hom _{\mathcal{SH}}(B, s_mG [m+n])$.
In addition, the differential $d_1:E_1^{m,n}\rightarrow E_1^{m+1,n}$
in Voevodsky's slice spectral sequence induces a map
between the spectral sequences:
	\begin{align} \label{eq.orthtoslic}
		\begin{array}{r}
\xymatrix@C=-0.5pt{E^{1}_{p,q}=\Hom_{\mathcal{SH}}(B, 
(bc_{p/p-1}s_mG[m+n])[q-p]) \ar@{=>}[rrr] \ar[d]^-{d_{1\ast}} &&&
	 E_1^{m,n} \ar[d]^-{d_1} \\
	E^{1}_{p,q}=\Hom_{\mathcal{SH}}(B, (bc_{p/p-1}s_{m+1}G[m+1+n])[q-p]) 
	\ar@{=>}[rrr] &&& E_1^{m+1,n}}
		\end{array}
	\end{align}
	\end{cor}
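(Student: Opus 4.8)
The plan is to apply Proposition \ref{prop.orth.conv} to the object $A = s_mG[m+n]$, and then carefully track the naturality of the construction with respect to the differential $d_1$ of Voevodsky's slice spectral sequence. First I would recall that for any $G\in\mathcal{SH}$ and any $m\in\mathbb Z$, Voevodsky's slice $s_mG$ lies in $\northogonalSH{m+1}$; indeed $s_mG$ is built as the cofiber $f_mG/f_{m+1}G$ where $f_mG\in\mathcal{SH}^{\mathrm{eff}}(m)$ is the $m$-effective cover, and by construction $s_mG$ is simultaneously $(m+1)$-effectively trivial, i.e.\ $\Hom_{\mathcal{SH}}(K,s_mG)=0$ for all $K\in\generators^{\mathrm{eff}}_{\mathcal{SH}}(m+1)$ (see \cite[Thm. 2.2, Cor. 4.5]{MR1977582}). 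Shifting by $[m+n]$ does not affect membership in the triangulated subcategory $\northogonalSH{m+1}$ (see \eqref{def.orthn}), so $A=s_mG[m+n]\in\northogonalSH{m+1}$, and in particular $A\in\northogonalSH{r}$ for $r=m+1$. Thus Proposition \ref{prop.orth.conv} applies verbatim: the spectral sequence \eqref{gen.spec.seq} with $A=s_mG[m+n]$ is strongly convergent, and by \eqref{main.cor} its abutment $\Hom_{\mathcal{SH}}(B,A)$ is precisely $\Hom_{\mathcal{SH}}(B,s_mG[m+n])=E_1^{m,n}$ of Voevodsky's slice spectral sequence. This gives the first assertion.

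For the second assertion I would argue by functoriality. The functors $bc_{\leq p}$, $bc_{p/p-1}$, and the homotopy colimit $\hocolim_n bc_{\leq n}$, together with all the structure maps in the tower \eqref{A.birtow} and the distinguished triangle \eqref{dist.triang.ss}, are all \emph{natural} in the object of $\mathcal{SH}$ — this is explicit in \cite[3.2.14, 3.2.15, 3.2.16]{MR3614974} and in the parallel construction recalled in \eqref{thm.birspecseq}. Hence any morphism $\phi: A\to A'$ in $\mathcal{SH}$ induces a morphism of the associated spectral sequences \eqref{gen.spec.seq}, which on $E_1$-terms is $\phi_\ast$ applied after $bc_{p/p-1}$ and $[q-p]$, and on the abutment is $\phi_\ast:\Hom_{\mathcal{SH}}(B,A)\to\Hom_{\mathcal{SH}}(B,A')$. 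I would apply this with $\phi = d_1: s_mG[m+n]\to s_{m+1}G[m+1+n]$, the map in $\mathcal{SH}$ whose induced map on $\Hom_{\mathcal{SH}}(B,-)$ is exactly the differential $d_1:E_1^{m,n}\to E_1^{m+1,n}$ of the slice spectral sequence (this $d_1$ is the connecting map associated to the triangle $s_{m+1}G\to f_mG/f_{m+2}G\to s_mG\to s_{m+1}G[1]$, see \cite[\S 7]{MR1977582}). Functoriality of the whole package then yields the commutative square \eqref{eq.orthtoslic} directly, with vertical maps $d_{1\ast}$ on the $E_1$-terms and $d_1$ on the abutments.

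The main obstacle, such as it is, is making precise that the differential $d_1:E_1^{m,n}\to E_1^{m+1,n}$ of the slice spectral sequence is genuinely induced by an honest morphism $s_mG[m+n]\to s_{m+1}G[m+1+n]$ in $\mathcal{SH}$ (as opposed to merely being defined on $\Hom$-groups), so that the naturality of $bc_{\leq\bullet}$ can be invoked. This is where I would spend the most care: I would recall the standard description of the slice filtration differential as the composite $s_mG\to (f_mG/f_{m+1}G)$-to-cofiber-connecting-map landing in $s_{m+1}G[1]$, i.e.\ the boundary map of the octahedron relating $f_{m+1}G\subseteq f_mG$ and their slices, which is a bona fide map in $\mathcal{SH}$; shifting by $[m+n]$ gives the required $\phi$. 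Once this is in hand, everything else is a formal consequence of the naturality already established for \eqref{A.birtow}, \eqref{dist.triang.ss}, and \eqref{gen.spec.seq}, together with Proposition \ref{prop.orth.conv} applied to both $A=s_mG[m+n]$ and $A'=s_{m+1}G[m+1+n]$ to identify the two abutments with $E_1^{m,n}$ and $E_1^{m+1,n}$ respectively.
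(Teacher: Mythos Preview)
Your proposal is correct and follows essentially the same route as the paper: you invoke $s_mG\in\northogonalSH{m+1}$ to apply Proposition~\ref{prop.orth.conv}, and then use functoriality of the tower \eqref{A.birtow} applied to the honest map $s_mG\to s_{m+1}G[1]$ in $\mathcal{SH}$ underlying $d_1$. The paper describes this map as the composite $\pi_{m+1}[1]\circ\sigma_m$ rather than via the octahedron on $f_{m+2}G\to f_{m+1}G\to f_mG$, but these are the same morphism, so there is no substantive difference.
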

	\begin{proof}
By construction $s_mG \in \northogonalSH{m+1}$
\cite[Thm. 2.2(3)]{MR1977582}.
Thus the strong convergence of \eqref{gen.spec.seq} for
$A=s_mG[m+n]$ follows directly from \eqref{prop.orth.conv}.

We observe that the differential $d_1:d_1:E_1^{m,n}\rightarrow E_1^{m+1,n}$
in the slice spectral sequence is induced by the map
$\partial [m+n]$ in $\mathcal{SH}$
where $\partial$ is the following composition 
\cite[Thm. 2.2(1)]{MR1977582}:
	\begin{align*}
\xymatrix{s_mG \ar[r]^-{\sigma _m}& f_{m+1}G[1]
\ar[r]^{\pi _{m+1}[1]} & s_{m+1}G[1].}
	\end{align*}
Since the tower \eqref{A.birtow} is functorial in $\mathcal{SH}$
we conclude that $\partial [m+n]$ induces
the desired map of spectral sequences \eqref{eq.orthtoslic}.
	\end{proof}


\bibliography{biblio_conv}
\bibliographystyle{abbrv}

\end{document}